\documentclass[10pt]{article}
\usepackage{amsthm}
\usepackage{amsfonts}
\usepackage{amssymb}
\usepackage{amsmath}
\usepackage{nccmath}
\usepackage{mathtools}
\usepackage{mathrsfs}
\usepackage{setspace}
\usepackage{graphicx}
\usepackage{booktabs}
\usepackage{mathpazo}
\usepackage{charter}
\usepackage{mathptmx}
\usepackage{float}
\usepackage[numbers,sort&compress]{natbib}
\onehalfspacing
\usepackage{enumerate}
\usepackage{authblk}

\newtheorem{theorem}{Theorem}[section]

\theoremstyle{definition}
\newtheorem{definition}[theorem]{Definition}
\newtheorem{example}[theorem]{Example}

\newtheorem{corollary}[theorem]{Corollary}

\theoremstyle{remark}
\newtheorem{remark}[theorem]{Remark}

\numberwithin{equation}{section}

\DeclareMathOperator{\sgn}{sign}
\usepackage{amsmath,amssymb,graphicx} 
\usepackage[margin=1in]{geometry} 
\usepackage{enumerate}
\usepackage{authblk}
\usepackage{placeins}
\usepackage{array}
\usepackage{academicons}
\usepackage{xcolor}
\usepackage{booktabs}
\usepackage{svg}
\usepackage[utf8]{inputenc}
\usepackage{tikz}
\usepackage{subcaption}
\usepackage{fancyhdr}
\usepackage{pgfplots}
\usepackage{adjustbox}
\usepackage{hyperref}
\usepackage[T1]{fontenc}
\usepackage{lmodern}
\hypersetup{colorlinks=true, linkcolor=blue, citecolor=red, urlcolor=teal}
\usetikzlibrary{automata} 
\usetikzlibrary{positioning} 
\usetikzlibrary{arrows}

\providecommand{\keywords}[1]{\textbf{\textit{Keywords:}} #1}
\providecommand{\subjclass}[1]{\textbf{\textit{MSC2020:}} #1}
\begin{document}

\nocite{*} 

\title{Decomposition of Spaces of Periodic Functions into Subspaces of Periodic and Antiperiodic Functions and Its Connection to the Rademacher System and the Haar Wavelet Basis}

\author{Hailu Bikila Yadeta \\ email: \href{mailto:haybik@gmail.com}{haybik@gmail.com} }
  \affil{Salale University, College of  Natural Sciences, Department of Mathematics,\\
   Fiche, Oromia, Ethiopia}
\date{\today}
\maketitle

\begin{abstract}
\noindent We prove that the space $\mathbb{P}_p$ of $p$ - periodic functions decomposes as the direct sum $\mathbb{P}_{p/2} \oplus \mathbb{A}\mathbb{P}_{p/2}$, where $\mathbb{P}_{p/2}$ denotes the space of functions periodic with period $p/2$ and $\mathbb{A}\mathbb{P}_{p/2}$ denotes the space of functions antiperiodic with antiperiod $p/2$ (i.e., $f(x+p/2) = -f(x)$). Iterating this decomposition yields a hierarchy of refined periodic subspaces.

Under suitable uniform decay conditions on the residual periodic components, any $p$-periodic function on a compact interval admits a convergent expansion into a series of antiperiodic components with distinct antiperiods. As a concrete example, the continued periodic-antiperiodic decomposition of the fractional part function $\{x\}$ generates the Rademacher system.

Additionally, we examine an orthogonal decomposition of $L^2(0,1)$ induced by reflection symmetry about the midpoint $x = 1/2$, i.e., $f(x) = \pm f(1-x)$. Using explicit projection operators, we show that this reflection-based decomposition generates a multiscale structure analogous to the Haar multiresolution analysis: the antiperiodic (odd-reflection) component yields a system equivalent to the Haar wavelet family $\{\psi_{j,k}\}$, while the periodic (even-reflection) component corresponds to the scaling space of piecewise constant functions. This provides a boundary-condition-based interpretation of the Haar wavelet basis.
\end{abstract}

\noindent\keywords{periodic function, antiperiodic function, direct sum, decomposition, Rademacher system, Haar wavelet, multiresolution analysis }\\
\subjclass{Primary : 42C40, 42A16 ;  Secondary: 39A70, 39A06 }\\

\section{Introduction and preliminaries}
It is evident that any 1-periodic function is also 2-periodic. Consequently, the set of all 1-periodic functions is a subset of the set of all 2-periodic functions. The primary question then becomes: *which functions, other than the 1-periodic ones, are contained within the set of all 2-periodic functions?* This question leads to the study of how spaces of periodic functions can be decomposed into subspaces. The answers to these and similar questions will be addressed in this paper. Before presenting the main results, we provide an example from classical mathematics where a vector space is expressed as a direct sum of its subspaces. Let $\mathcal{F}$ denote the set of all functions
$  f: \mathbb{R}\rightarrow \mathbb{R}  $.
Let us  denote by $\mathbb{E}$ the subspace of all even functions in $\mathcal{F}$ and the subspace of all odd functions in  $\mathcal{F}$ by  $ \mathbb{O} $. Then we have the decomposition
 \begin{equation}\label{eq:oddevendecomposition}
  \mathcal{F} = \mathbb{E} \oplus  \mathbb{O}.
\end{equation}
  According to (\ref{eq:oddevendecomposition}), each  element  $ f \in \mathcal{F} $ can be written as $ f = f_e + f_o  $, where
  \begin{equation}\label{eq:odandevencomponents}
   f_e(x) = \frac{f(x)+f(-x)}{2}  \in \mathbb{E}, \quad  f_o(x) = \frac{f(x)-f(-x)}{2} \in \mathbb{O},
  \end{equation}
The only element common to both $\mathbb{E}$ and $\mathbb{O}$ is the zero function $f(x) = 0$.

The focus  of this paper  is the decomposition of periodic spaces into subspaces of periodic and antiperiodic functions. Let us denote by $ \mathbb{P}_p $, the space of all periodic functions of period $p$ and by $ \mathbb{AP}_p $, the space of all antiperiodic functions of antiperiod $p$ :
  \begin{equation}\label{eq:periodicandantiperiodic}
   \mathbb{P}_p = \{ f\in \mathcal{F}: f(x+p)=f(x) \}, \quad  \mathbb{AP}_p = \{ f\in \mathcal{F}: f(x+p)= -f(x) \}. \end{equation}
The spaces $ \mathbb{P}_p $, and $ \mathbb{AP}_p $  form subspaces of $ \mathcal{F }$.
For $ h \in \mathbb{R} $, we define the shift operator $ E ^h $ and the identity operator $ I $ as:
$$ E ^h y(x) := y(x+h),\quad  I y(x) := y(x) .$$

Let $p >0$ . In terms of the shift operator  $f$ is a $p$-periodic function if  $ E^p f(x)= f(x) $ and  a function $g$ is $p$-antiperiodic if $ E^p g(x)= -g(x) $

In this paper, we demonstrate that any periodic function $f$ with period $p$ can be uniquely decomposed into a sum of a periodic function of period $p/2$ and an antiperiodic function of antiperiod $p/2$.

This work establishes a fundamental connection between elementary periodic-antiperiodic decomposition and two cornerstones of harmonic analysis: the Rademacher system and the Haar wavelet basis

Periodic and antiperiodic functions play an important role in solving linear difference equations. In particular, the general solutions often comprise linear combinations of independent solutions, which are either periodic or antiperiodic functions with some period. See \cite{LB}, \cite{MT}. Additionally, we demonstrate that certain periodic functions can be expressed as an infinite series of antiperiodic functions with varying antiperiods plus a constant.
 \begin{remark}
  Every $p$-antiperiodic function is $2p$-periodic. However not every $2p$-periodic function is a $p$-antiperiodic function. Further properties of $p$-antiperiodic function are available in the literature. Any finite linear combination, or any uniformly convergent infinite series, each of whose terms is a p-periodic (respectively, p-antiperiodic) function, is itself a p-periodic (respectively, p-antiperiodic) function. For example, $ f(x)=\sum_{n =1}^{\infty}\frac{\cos ((2n+1)x ) }{n^2}$ is  $\pi $-antiperiodic function defined by a uniformly convergent  series each of its terms is $\pi $-antiperiodic. See \cite{GN}.
\end{remark}

\begin{remark}
For any fixed $ p > 0$,  the constant function $ f(x)=0 $ is the only function that is both $p$-periodic and $p$-antiperiodic.
\end{remark}

\begin{theorem}
The composition of a periodic or antiperiodic function with an even or odd function satisfies:
\begin{enumerate}
\item[(i)] If $f\in \mathbb{O}$ (odd) and $g\in \mathbb{A}\mathbb{P}_p$ (antiperiodic), then $f\circ g\in \mathbb{A}\mathbb{P}_p$.
\item[(ii)] If $f\in \mathbb{E}$ (even) and $g\in \mathbb{A}\mathbb{P}_p$, then $f\circ g\in \mathbb{P}_p$.
\item[(iii)] If $f\in \mathcal{F}$  and $g\in \mathbb{P}_p$ (periodic), then $f\circ g\in \mathbb{P}_p$.
\end{enumerate}
\end{theorem}

\begin{theorem}
Let $ \omega > 0 $, and $ f \in \mathcal{F} $. Define $g(x)=f(\omega x)$.
\begin{enumerate}
\item[(i)] If $f \in \mathbb{AP}_p$, then $ g \in \mathbb{AP}_{\frac{p}{\omega}}$.
\item[(ii)] If $ f\in \mathbb{P}_p$, then $ g \in \mathbb{P}_{\frac{p}{\omega}}$.
\end{enumerate}
\end{theorem}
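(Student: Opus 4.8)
The plan is to verify the two defining functional equations directly, by substituting the dilated argument into the hypothesis on $f$.

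First I would treat the antiperiodic case. Assume $f \in \mathbb{AP}_p$, so that $f(y+p) = -f(y)$ holds for every $y \in \mathbb{R}$. To establish $g \in \mathbb{AP}_{p/\omega}$ it suffices to check the single identity $g(x + p/\omega) = -g(x)$ for all $x$. Expanding via the definition of $g$ gives $g(x + p/\omega) = f(\omega(x + p/\omega)) = f(\omega x + p)$, and then applying the antiperiodicity of $f$ with $y = \omega x$ yields $f(\omega x + p) = -f(\omega x) = -g(x)$. Since $\omega > 0$ we have $p/\omega > 0$, so $p/\omega$ is a legitimate antiperiod and the claim follows.

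Second, I would run the identical computation in the periodic case: if $f \in \mathbb{P}_p$ then $g(x + p/\omega) = f(\omega(x + p/\omega)) = f(\omega x + p) = f(\omega x) = g(x)$, so $g \in \mathbb{P}_{p/\omega}$, with $p/\omega > 0$ again guaranteed by $\omega > 0$.

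If one additionally wants $p/\omega$ to be the \emph{fundamental} (anti)period of $g$ whenever $p$ is the fundamental (anti)period of $f$, I would argue by contradiction: a strictly smaller (anti)period $q \in (0, p/\omega)$ for $g$ would, running the same substitution backwards, produce $\omega q \in (0,p)$ as an (anti)period for $f$, contradicting the minimality of $p$. I do not expect any genuine obstacle here; the only points requiring care are that the argument-dilation inverts the period (it becomes $p/\omega$, not $\omega p$) and that the hypothesis $\omega > 0$ is precisely what keeps the resulting (anti)period positive.
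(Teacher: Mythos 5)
Your proof is correct and is essentially identical to the paper's: the same direct substitution $g(x+p/\omega)=f(\omega x+p)=-f(\omega x)=-g(x)$, with the periodic case handled by the same computation. The extra remark on minimality of $p/\omega$ as a fundamental (anti)period is a sound bonus but not needed for the statement as the paper proves it.
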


\section{Main Results}
\subsection{Decomposition of spaces of periodic functions}

\begin{theorem}\label{eq:Decompositionthm}
  The space $ \mathbb{P}_{p} $ of all $ p $- periodic functions is the direct sum of the space $ \mathbb{P}_{p/2 }$ of all $ p/2 $-periodic function and the space $ \mathbb{AP}_{p/2 } $ of all $ p/2 $-antiperiodic function.
\end{theorem}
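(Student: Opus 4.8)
The plan is to imitate the classical even--odd decomposition \eqref{eq:oddevendecomposition}, replacing the reflection $x\mapsto -x$ by the half-period shift $E^{p/2}$, and then to check that the two candidate summands are genuine subspaces of $\mathbb{P}_p$ meeting only in $0$.

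First I would record that both summands really sit inside $\mathbb{P}_p$, so that the statement is not vacuous: if $f\in\mathbb{P}_{p/2}$ then $E^p f=E^{p/2}(E^{p/2}f)=E^{p/2}f=f$, so $f\in\mathbb{P}_p$; and if $f\in\mathbb{AP}_{p/2}$ then $E^p f=E^{p/2}(E^{p/2}f)=-E^{p/2}f=f$, so again $f\in\mathbb{P}_p$ (this is exactly the ``antiperiod $\Rightarrow$ double period'' observation from the Remark). Hence $\mathbb{P}_{p/2}+\mathbb{AP}_{p/2}$ is a well-defined subspace of $\mathbb{P}_p$.

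Next, for the existence of the decomposition, given $f\in\mathbb{P}_p$ I would put
$$ g:=\tfrac12\bigl(I+E^{p/2}\bigr)f,\qquad h:=\tfrac12\bigl(I-E^{p/2}\bigr)f, $$
so that $f=g+h$ is immediate. The claim $g\in\mathbb{P}_{p/2}$ reduces to $E^{p/2}g=\tfrac12(E^{p/2}+E^{p})f=\tfrac12(E^{p/2}+I)f=g$, where I used $E^p f=f$; similarly $E^{p/2}h=\tfrac12(E^{p/2}-E^{p})f=\tfrac12(E^{p/2}-I)f=-h$, so $h\in\mathbb{AP}_{p/2}$. This operator computation is the computational heart of the argument, but it is short.

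Finally, for directness I would show $\mathbb{P}_{p/2}\cap\mathbb{AP}_{p/2}=\{0\}$: if $f$ lies in both spaces then $f=E^{p/2}f=-f$, so $2f=0$ and $f\equiv 0$ (cf.\ the Remark that $0$ is the only simultaneously periodic and antiperiodic function). Together with the previous step this gives $\mathbb{P}_p=\mathbb{P}_{p/2}\oplus\mathbb{AP}_{p/2}$, and in particular the representation $f=g+h$ with $g,h$ as displayed above is the unique one. I do not anticipate a serious obstacle here; the only points requiring care are the two inclusions into $\mathbb{P}_p$ and keeping the shift-operator identities straight, everything else being formally identical to the even/odd and symmetric/skew-symmetric splittings recalled in the introduction.
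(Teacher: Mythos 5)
Your proposal is correct and follows essentially the same route as the paper: the same half-shift averaging formulas $g=\tfrac12(I+E^{p/2})f$, $h=\tfrac12(I-E^{p/2})f$ for existence, and the same trivial-intersection argument for uniqueness. If anything, yours is slightly more complete, since you explicitly verify that $g\in\mathbb{P}_{p/2}$ and $h\in\mathbb{AP}_{p/2}$ (and that both summands lie in $\mathbb{P}_p$), steps the paper's proof asserts without checking.
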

\begin{proof}
  Let $ h \in \mathbb{P}_{p} $. Suppose that
   \begin{equation}\label{eq:p1ap1decomposition}
     h(x)= f(x)+ g(x),
   \end{equation}
    for some  $ f \in \mathbb{P}_{ p/2 } $, and  $ g \in \mathbb{AP}_{ p/2}  $. Then
  \begin{equation}\label{eq:shiftedp1ap1decomposition}
     h(x+p/2)= f(x+p/2)+g(x+p/2)= f(x)-g(x).
     \end{equation}
     Then solving (\ref{eq:p1ap1decomposition}) and (\ref{eq:shiftedp1ap1decomposition}) simultaneously we get
     \begin{equation}\label{eq:fandg}
       f(x)= \frac{1}{2}(h(x)+h(x+p/2)), \quad g(x)= \frac{1}{2}(h(x)-h(x+p/2)).
     \end{equation}
     Then $f$ and $g$ defined as in (\ref{eq:fandg}) satisfy the required condition. It remains to show that the representation is unique. Suppose that $ f_1, f _2 \in \mathbb{P}_{p/2} $ and  $ g_1, g_2 \in \mathbb{AP}_{p/2} $ such that $ h = f_1 + g_1= f_2 + g_2 $. Then we have $f_1-f_2 = g_2-g_1$. Since  $f_1-f_2  \in \mathbb{P}_{p/2} $ and $ g_1-g_2  \in \mathbb{AP}_{p/2} $, their equality implies $ f_1-f_2,\, g_1-g_2 \in \mathbb{P}_{p/2} \cap \mathbb{AP}_{p/2}= \{ 0 \}    $. We have  $f_1= f_2$ and $  g_1 = g_2 $.
\end{proof}

We have demonstrated that a periodic function of period $p$ can be decomposed into a periodic function of period $p/2$ and an antiperiodic function of antiperiod $p/2$. This decomposition process can be iterated: starting with the  period $p/2$, we can further decompose it into a periodic function of period $p/4$ and an antiperiodic function of antiperiod $p/4$, and so on.

\begin{definition}
 Let $f$ be a periodic function of period $p$. The\emph{ $n$-th periodic generation of $f$}, denoted by $f_n $,  is the residual periodic function of period $p / 2^n $ obtained after $n$ decompositions of  $f$ and $f_0 = f $. The \emph{ $n$-th antiperiodic generation of $f$}, denoted by $ \tilde{f}_n $,  is an antiperiodic function of antiperiod $p / 2^n $ derived from $f$  after $n$ decompositions.
  \end{definition}

  \begin{remark}
  If $f$ is a $p$-periodic function that is also a $p/2$-antiperiodic function, then $ f \in \mathbb{AP}_{p/2}$. The decomposition in Theorem   \ref{eq:Decompositionthm}, yields $f =\tilde{f_1} + 0 $. That is, the first periodic generation $f_1$ of $f$ is $0$, and the first antiperiodic generation $ \tilde{f_1} $  of $f$ is $f$ itself. Consequently, all subsequent periodic and antiperiodic generations of $f$ are all $0$ s.
\end{remark}

  \begin{theorem}
  Given a periodic function $f$  of period  $p$ the\emph{ $n$-th periodic generation of $f$} is given by
    \begin{equation}\label{eq:nthperiodic}
     f_n = \frac{1}{2^n} \prod_{i=1}^{n} (1+E^{\frac{p}{2^i}}) f.
  \end{equation}
  \end{theorem}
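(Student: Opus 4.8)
The natural route is induction on $n$, using Theorem \ref{eq:Decompositionthm} (more precisely formula (\ref{eq:fandg})) as the engine at each step. For the base case $n=1$, the definition of the first periodic generation together with (\ref{eq:fandg}) gives directly $f_1(x)=\tfrac12\bigl(h(x)+h(x+p/2)\bigr)=\tfrac12\bigl(I+E^{p/2}\bigr)f$, which is exactly (\ref{eq:nthperiodic}) for $n=1$.

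For the inductive step, I would first record the structural fact, obtained by iterating Theorem \ref{eq:Decompositionthm}, that the $n$-th periodic generation $f_n$ is a genuinely $p/2^n$-periodic function, so that the decomposition theorem applies to it with ``$p$'' replaced by $q:=p/2^n$. By the very definition of generations, $f_{n+1}$ is the first periodic generation of $f_n$; applying (\ref{eq:fandg}) to $f_n$ with period $q$ yields
\[
f_{n+1}=\tfrac12\bigl(I+E^{q/2}\bigr)f_n=\tfrac12\bigl(I+E^{p/2^{n+1}}\bigr)f_n .
\]
Substituting the inductive hypothesis $f_n=\frac{1}{2^n}\prod_{i=1}^{n}\bigl(1+E^{p/2^i}\bigr)f$ and absorbing the new factor into the product gives $f_{n+1}=\frac{1}{2^{n+1}}\prod_{i=1}^{n+1}\bigl(1+E^{p/2^i}\bigr)f$, completing the induction.

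Two small points deserve explicit mention. First, the operators $E^{p/2^i}$ all commute (since $E^aE^b=E^{a+b}=E^bE^a$), so the product $\prod_{i=1}^{n}(1+E^{p/2^i})$ is unambiguous and the factor $(1+E^{p/2^{n+1}})$ may be appended freely. Second, one should confirm that the object produced really is $p/2^n$-periodic; this is not an extra hypothesis but a consequence of Theorem \ref{eq:Decompositionthm}, which guarantees at each stage that the periodic component lies in $\mathbb{P}_{q/2}$. I do not anticipate a genuine obstacle here: the only thing to be careful about is keeping the bookkeeping of the halved periods consistent through the induction, i.e. that decomposing a function of period $p/2^n$ produces a periodic component of period $p/2^{n+1}$, which matches the exponent appearing in the $(n+1)$-st factor of the product.
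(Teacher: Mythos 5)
Your induction is correct and complete: the base case is exactly formula (\ref{eq:fandg}) applied to $f$, and the inductive step correctly applies (\ref{eq:fandg}) to $f_n\in\mathbb{P}_{p/2^n}$ (note $\mathbb{P}_{p/2^n}$ is closed under the operators involved, and the zero function causes no trouble in degenerate cases), with the commutativity remark disposing of the only bookkeeping issue. The paper in fact states this theorem without any proof, so there is nothing to compare against; your argument is the natural one and is precisely the recursion $f_{n}=\tfrac12\bigl(I+E^{p/2^{n}}\bigr)f_{n-1}$ that the paper implicitly relies on when it later derives the formula for $\tilde{f}_n$ from $f_{n-1}=f_n+\tilde{f}_n$.
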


\begin{proof}
We prove by induction on $n$. For $n=0$, the formula gives $f_0 = f$, which is trivially true. Assume the formula holds for some $n\ge 0$. From the decomposition in Theorem 2.1 applied to $f_n$, we have
$$  f_{n+1} = \frac{1}{2}(I + E^{p/2^{n+1}})f_n.  $$
Using the induction hypothesis,
$$ f_{n+1} = \frac{1}{2}\left(I + E^{p/2^{n+1}}\right)\left(\frac{1}{2^n}\prod_{i=1}^{n}(I + E^{p/2^i})f\right)
= \frac{1}{2^{n+1}}\prod_{i=1}^{n+1}(I + E^{p/2^i})f. $$
Thus the formula holds for $n+1$, completing the induction.
\end{proof}

\begin{theorem}
     Given a periodic function $f$ of period $p$, the \emph{$n$-th antiperiodic generation of $f$} is given by
     \begin{equation}\label{eq:nthantiperiodic}
       \tilde{f}_n = \frac{1}{2^n}  ( I-E ^{\frac{p}{2^n}})\prod_{i=1}^{n-1} (1+E^{\frac{p}{2^i}}) f.
     \end{equation}
  \end{theorem}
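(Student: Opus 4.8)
The plan is to derive this directly from formula \eqref{eq:nthperiodic} for the $n$-th periodic generation together with a single application of the one-step decomposition in Theorem~\ref{eq:Decompositionthm}. Write $f_{n-1}$ for the $(n-1)$-th periodic generation, with the convention $f_0 = f$; by \eqref{eq:nthperiodic} (or, for $n=1$, by definition) $f_{n-1}$ is a periodic function of period $p/2^{\,n-1}$. Applying Theorem~\ref{eq:Decompositionthm} to $f_{n-1}$ with half-period $p/2^{\,n}$, its periodic component is $f_n$ and its antiperiodic component is, by definition of the generations, exactly $\tilde f_n$; the second identity in \eqref{eq:fandg} therefore yields
\begin{equation*}
  \tilde f_n = \tfrac{1}{2}\bigl(I - E^{p/2^{\,n}}\bigr) f_{n-1}.
\end{equation*}

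It then remains only to substitute \eqref{eq:nthperiodic} with $n$ replaced by $n-1$ and simplify:
\begin{equation*}
  \tilde f_n = \tfrac{1}{2}\bigl(I - E^{p/2^{\,n}}\bigr)\cdot \frac{1}{2^{\,n-1}}\prod_{i=1}^{n-1}\bigl(1+E^{p/2^{\,i}}\bigr) f = \frac{1}{2^{\,n}}\bigl(I - E^{p/2^{\,n}}\bigr)\prod_{i=1}^{n-1}\bigl(1+E^{p/2^{\,i}}\bigr) f,
\end{equation*}
which is exactly \eqref{eq:nthantiperiodic}. Here I would use freely that all the shift operators commute, $E^{a}E^{b} = E^{a+b} = E^{b}E^{a}$, so that the scalar $\tfrac{1}{2}$ and the operator $I - E^{p/2^{\,n}}$ may be merged with $\tfrac{1}{2^{\,n-1}}\prod_{i=1}^{n-1}(1+E^{p/2^{\,i}})$ in any order.

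As a cross-check one can instead observe that $\tilde f_n = f_{n-1} - f_n$ (the antiperiodic component is whatever remains after the periodic component is removed) and subtract the two instances of \eqref{eq:nthperiodic}: factoring out $\tfrac{1}{2^{\,n}}\prod_{i=1}^{n-1}(1+E^{p/2^{\,i}})$ leaves the bracket $2I - (1+E^{p/2^{\,n}}) = I - E^{p/2^{\,n}}$, which reproduces the same formula; and either route organizes cleanly as an induction on $n$, with base case $n=1$ being the identity $\tilde f_1 = \tfrac12(I - E^{p/2})f$ from \eqref{eq:fandg} under the empty-product convention $\prod_{i=1}^{0}(\cdot)=I$. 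I do not anticipate a genuine obstacle here; the only point demanding attention is the bookkeeping of the exponents $p/2^{\,i}$ and of the powers of $2$ — reconciling the $2^{\,n-1}$ inherited from $f_{n-1}$ with the target $2^{\,n}$ once the factor $\tfrac12$ is absorbed.
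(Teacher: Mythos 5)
Your proposal is correct and matches the paper's argument: the paper proves this exactly via your ``cross-check'' route, writing $\tilde f_n = f_{n-1}-f_n$ and subtracting the two instances of \eqref{eq:nthperiodic}, and your primary route (applying $\tfrac12(I-E^{p/2^n})$ to $f_{n-1}$ via \eqref{eq:fandg}) is the same computation in a different order. No gaps.
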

  \begin{proof}
    The $(n-1)$-th periodic generation  $f_{n-1}$  is decomposed into the $n$-th periodic generation  $f_{n}$ and the $n$-th antiperiodic generation $\tilde {f}_n$. That is $ f_{n-1} = f_n + \tilde{f}_n $. Consequently by (\ref{eq:nthperiodic})
    $$\tilde{f}_n =  f_{n-1}- f_n=  \frac{1}{2^{n-1}} \prod_{i=1}^{n-1} (1+E^{\frac{p}{2^i}}) f -  \frac{1}{2^n} \prod_{i=1}^{n} (1+E^{\frac{p}{2^i}}) f, $$
    which, upon simplification, gives the desired result.
  \end{proof}

  \begin{theorem}
  For each $ n \in \mathbb{N }$,
    $$ f= f_n +  \sum_{k=1}^{n} \tilde{f}_k,$$
    where $f_n $ is the residual periodic part.
  \end{theorem}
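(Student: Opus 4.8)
The plan is to prove the identity $f = f_n + \sum_{k=1}^n \tilde f_k$ by induction on $n$, using the single-step decomposition that underlies the definition of the generations. The base case $n=1$ is precisely Theorem \ref{eq:Decompositionthm} applied to $f$: it gives $f = f_1 + \tilde f_1$, where $f_1 \in \mathbb{P}_{p/2}$ and $\tilde f_1 \in \mathbb{AP}_{p/2}$, which matches the claimed formula.

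For the inductive step, I would assume $f = f_{n-1} + \sum_{k=1}^{n-1}\tilde f_k$ and then invoke the defining relation of the generations, namely that $f_{n-1}$ (a periodic function of period $p/2^{n-1}$) decomposes via Theorem \ref{eq:Decompositionthm} into its periodic part $f_n$ of period $p/2^n$ and its antiperiodic part $\tilde f_n$ of antiperiod $p/2^n$; that is, $f_{n-1} = f_n + \tilde f_n$. This is exactly the relation already used in the proof of the formula for $\tilde f_n$ (equation \ref{eq:nthantiperiodic}). Substituting $f_{n-1} = f_n + \tilde f_n$ into the inductive hypothesis yields $f = f_n + \tilde f_n + \sum_{k=1}^{n-1}\tilde f_k = f_n + \sum_{k=1}^n \tilde f_k$, completing the induction.

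Alternatively — and perhaps cleaner for the write-up — one can give a direct telescoping argument: by definition $\tilde f_k = f_{k-1} - f_k$ for each $k \ge 1$, with the convention $f_0 = f$. Summing over $k$ from $1$ to $n$ gives $\sum_{k=1}^n \tilde f_k = \sum_{k=1}^n (f_{k-1} - f_k) = f_0 - f_n = f - f_n$, which rearranges to the claim. To make this rigorous one should note that $\tilde f_k = f_{k-1} - f_k$ holds because each stage applies Theorem \ref{eq:Decompositionthm} to $f_{k-1}$, producing $f_{k-1} = f_k + \tilde f_k$; one may also verify it directly from the closed forms \ref{eq:nthperiodic} and \ref{eq:nthantiperiodic}, since $\frac{1}{2^{k-1}}\prod_{i=1}^{k-1}(1+E^{p/2^i}) f - \frac{1}{2^k}\prod_{i=1}^{k}(1+E^{p/2^i}) f = \frac{1}{2^k}(I - E^{p/2^k})\prod_{i=1}^{k-1}(1+E^{p/2^i}) f = \tilde f_k$.

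There is no real obstacle here: the statement is essentially a telescoping sum and follows immediately once the one-step relation $f_{k-1} = f_k + \tilde f_k$ is in hand, which is already established. The only point requiring a word of care is the bookkeeping convention $f_0 = f$ (so that the first decomposition fits the same pattern) and making sure the reader sees that each $f_{k-1}$ is genuinely a periodic function — of period $p/2^{k-1}$ — so that Theorem \ref{eq:Decompositionthm} does apply at every stage; this is guaranteed by the fact that the periodic generation $f_k$ produced at each step is, by construction and by the theorem, periodic with period $p/2^k$.
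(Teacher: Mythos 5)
Your proof is correct: the paper states this theorem without any proof, and your telescoping argument (equivalently, the induction) rests exactly on the one-step relation $f_{k-1} = f_k + \tilde{f}_k$ with $f_0 = f$, which is the same relation the paper itself invokes when deriving the closed form \eqref{eq:nthantiperiodic} for $\tilde{f}_n$. Your added care about the convention $f_0 = f$ and about each $f_{k-1}$ being $p/2^{k-1}$-periodic so that Theorem \ref{eq:Decompositionthm} applies at every stage is exactly what a complete write-up needs.
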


\begin{theorem}
Let f be the a $p$-periodic function let $f_n$  denote its $n$th periodic generation( of period $ p/2^n $. If
 $$\lim\limits_{n\rightarrow \infty }\sup \limits_{x_0 \leq x \leq x_0 + p }|f_n(x)|=0, $$
 then $f$ is represented by an infinite series of its antiperiodic generations $ \tilde{f}_n $:
 \begin{equation}\label{eq:infiniteseries}
   f(x) = \sum_{n=1}^{\infty} \tilde{f}_n(x)\quad \text{ for all } x \in [x_0,x_0+p],
 \end{equation}
 and the convergence is uniform on $ [x_0,x_0+p] $.
\end{theorem}

\begin{example}
Consider the 1-periodic function $f(x) = \sin (2 \pi x )$. The decomposition of $f$ yields
  $$ f_1(x)= 0, \quad \tilde{f}_1(x) = f(x) $$
Since $ f \in \mathbb{AP}_{1/2} \subset   \mathbb{P}_{1} $, we have $f(x) = 0 + f(x) $, the desired decomposition. The second generation is the decomposition of $f_1$, and the zero function terminates the process.
\end{example}
Note that the periodic - antiperiodic function decomposition does not always terminate. In the next example and theorem, we show that the decomposition of the fractional part function, which is a $1$-periodic function, continues indefinitely.

\begin{example}
Let us study the periodic decomposition of the fractional part function up to five generations, in a tabular form.
 Antiperiodic component $\tilde{f}_n(x)$ has  antiperiod  $2^{-n}$; Residual periodic component $f_n(x)$ has period $2^{-n}$. After five generations the fractional part function can be  written as:
$$ \{x\} = \frac{\{32x\}}{32} + \frac{31}{64} - \sum_{k=1}^{5} \frac{(-1)^{\lfloor 2^n x \rfloor}}{2^{n+1}}   $$

\begin{table}[H]
\centering
\renewcommand{\arraystretch}{2.0}
\caption{Periodic decomposition of the fractional part function $\{x\}$ (generations 1--5, written with some indicative pattern of generations ). The amplitude of $f_n$ is given as peak-to-peak, while the amplitude of $\tilde{f}_n$ is the ordinary amplitude (half the peak-to-peak).}
\label{tab:fractional_decomp}
\begin{tabular}{cccccc}  
\toprule
$n$ & $f_n(x)$ (periodic part) & $\tilde{f}_n(x)$ (antiperiodic part) & Period & $f_n$ amplitude  & $\tilde{f}_n$ amplitude \\
\midrule
1 & $\displaystyle \frac{\{2x\}}{2} + \frac{2-1}{2^2}$   & $\displaystyle -\frac{(-1)^{\lfloor 2x\rfloor}}{2^2}$   & $2^{-1}$ & $\frac{1}{2}$ & $\frac{1}{2^2}$ \\[3pt]
2 & $\displaystyle \frac{\{2^2x\}}{2^2} + \frac{2^2-1}{2^3}$   & $\displaystyle -\frac{(-1)^{\lfloor 2^2x\rfloor}}{2^3}$   & $2^{-2}$ & $\frac{1}{4}$ & $\frac{1}{2^3}$ \\[3pt]
3 & $\displaystyle \frac{\{2^3x\}}{2^3} + \frac{2^3-1}{2^4}$  & $\displaystyle -\frac{(-1)^{\lfloor 2^3x\rfloor}}{2^4}$  & $2^{-3}$ & $\frac{1}{2^3}$ & $\frac{1}{2^4}$ \\[3pt]
4 & $\displaystyle \frac{\{2^4x\}}{2^4} + \frac{2^4-1}{2^5}$ & $\displaystyle -\frac{(-1)^{\lfloor 2^4x\rfloor}}{2^5}$ & $2^{-4}$ & $\frac{1}{2^4}$ & $\frac{1}{2^5}$ \\[3pt]
5 & $\displaystyle \frac{\{2^5 x\}}{2^5} + \frac{2^5-1}{2^6}$ & $\displaystyle -\frac{(-1)^{\lfloor 2^5x\rfloor}}{2^6}$ & $2^{-5}$ & $\frac{1}{2^5}$ & $\frac{1}{2^6}$ \\
\bottomrule
\end{tabular}
\end{table}
\end{example}


\begin{theorem}
  The $n$-th periodic generation of the $1$-periodic fractional part function $f(x)= \{x\}$ is the $\frac{1}{2^n}$-periodic function given by:
  \begin{equation}\label{eq:nthperforfraction}
   f_n(x)= \frac{1}{2^n}  \sum_{k=0}^{2^n-1}\left\{x+ \frac{k}{2^n}\right \}.
\end{equation}
\end{theorem}

\begin{proof}
   We prove by induction over $n$. For $n=0$ the result yields $f_0(x)=f(x)$, which is the given function itself. For $n=1$ we get
   $$f_1(x)= \frac{\{x\}+\{x+1/2\}}{2} = \frac{1}{2}\sum_{k=0}^{1}\left\{x+ \frac{k}{2}\right \}, $$
   which is the desired result according to (\ref{eq:fandg}). Now suppose that, for arbitrary $n\in \mathbb{\mathbb{N}} $, the equation (\ref{eq:nthperforfraction}) holds true. According to (\ref{eq:fandg}),

   \begin{align}\label{eq:nplus1step}
     f_{n+1}(x) & =  \frac{1}{2} \left( f_n(x) + f_n(x +1/2^n)   \right)  \nonumber \\
      & = \frac{1}{2} \left( \frac{1}{2^{n}}  \sum_{k=0}^{2^n-1}\left\{x+ \frac{k}{2^n}\right \}  + \frac{1}{2^{n}}  \sum_{k=0}^{2^n-1} \left\{x+ \frac{k}{2^{n}} + \frac{1}{2^{n+1}} \right \}\right).
   \end{align}
Simplifying (\ref{eq:nplus1step}), which is a refined sum of (\ref{eq:nthperforfraction}), yields

  \begin{equation}\label{eq:nplusgenoffraction}
    f_{n+1}(x)= \frac{1}{2^{n+1}} \sum_{k=0}^{2^{n+1}-1} \left\{x+ \frac{k}{2^{n+1}}\right \}.
  \end{equation}
  This proves the general formula (\ref{eq:nthperforfraction}) also applies for $n+1$. Hence the formula (\ref{eq:nthperforfraction}) is proved to be true for all $n \in \mathbb{N }$.
\end{proof}

\begin{theorem}
  The $n$-th antiperiodic generation of the $1$-periodic function  $f(x)=\{x\}$ is  the $\frac{1}{2^n}$-antiperiodic function given by
  \begin{equation}\label{eq:nthantiperforfraction}
  \tilde{f}_n(x)=   \frac{1}{2^n}  \sum_{k=0}^{2^n-1} (-1)^k \left\{x+ \frac{k}{2^n}\right \}.
  \end{equation}
\end{theorem}

\begin{proof}
By (\ref{eq:fandg}), (\ref{eq:nthperforfraction}), we get
  \begin{align*}
    \tilde{f}_n(x) =  \frac{f_{n-1}(x)-f_{n-1}\left(x+ \frac{1}{2^n} \right)}{2} &
      =\frac{1}{2^n}  \sum_{k=0}^{2^{n-1}-1} \left(   \left\{ x+\frac{k}{2^{n-1}}  \right\}  -\left\{ x+\frac{k}{2^{n-1}}  +\frac{1}{2^n}   \right\} \right)     \\
     & = \frac{1}{2^n} \sum_{k=0}^{2^n -1}(-1)^k \left\{ x+\frac{k}{2^{n}}  \right\}.
  \end{align*}

\end{proof}

\begin{theorem}
   Let $f_n(x)$ is the $n$-th periodic generation of the $1$-periodic function $f(x)= \{x\}$ given by (\ref{eq:nthperforfraction}). Then
  $$ \lim_{n \rightarrow \infty } f_n(x)= \frac{1}{2}$$
\end{theorem}

\begin{proof}
   We proceed to evaluate the required limit as the limit of a  Riemann sum of  a Rieman-integrable function.
\begin{align*}
    \lim_{n \rightarrow \infty } f_n(x) & =  \lim_{n \rightarrow \infty }\frac{1}{2^n}  \sum_{k=0}^{2^n-1}  \left\{x+ \frac{k}{2^n}\right \} \\
     & = \lim_{n \rightarrow \infty } \left(\frac{1}{2^n}  \sum_{k=0}^{2^n}  \left\{x+ \frac{k}{2^n}\right \} - \frac{\{x\}}{2^n}\right)\\
     & = \lim_{n \rightarrow \infty } \frac{1}{2^n}  \sum_{k=0}^{2^n}  \left\{x+ \frac{k}{2^n}\right \} = \int_{x}^{x+1}\{s\}ds= \int_{0}^{1}\{s\}ds= \int_{0}^{1}s\, ds= \frac{1}{2}
  \end{align*}
  \end{proof}

\begin{theorem}
   The $n$-th antiperiodic generation of the fractional part function $\{x\}$, given in (\ref{eq:nthantiperforfraction}), can be written alternatively, without summation, as

  \begin{equation}\label{eq:nthantipergenenwithoutsum}
    \tilde{f}_n(x) = - \frac{(-1)^ \lfloor 2^n x \rfloor }{2^{n+1}}.
  \end{equation}
\end{theorem}

\begin{proof}

We now prove that
\begin{equation}\label{eq:equalityoftwoforms}
   \frac{1}{2^n}  \sum_{k=0}^{2^n-1} (-1)^k \left\{x+ \frac{k}{2^n}\right \} =   - \frac{(-1)^ \lfloor 2^n x \rfloor }{2^{n+1}} .
\end{equation}

It suffices to show that they are equal on the interval $[0,\frac{1}{2^{n-1}} ) $  because both sides are $2^{1-n}$-
periodic (the left-hand side by the periodicity argument given above, the right hand side of \eqref{eq:nthantipergenenwithoutsum}  by adding $2^{1-n}$ changes by $\lfloor 2^n x \rfloor $ by $2$  and that $(-1)^2= 1  $ ). We consider two subintervals.

\textbf{Case 1}: Let  $0 \leq x \frac{1}{2^n}$. Then $0 \leq  2^n x < 1 $. Therefore, $ \lfloor 2^n x    \rfloor  = 0 $, and the right hand side is $ -\frac{1}{2^{n+1}}$.

For $0 \leq x <  \frac{1}{2^n}$ we have
$$  0 \leq x+ \frac{k}{2^n} < \frac{k+1}{2^n}  \leq 1 , \quad k=0,1,\cdots ,2^n-1.       $$

Therefore, $\left\{x+ \frac{k}{2^n} \right \} =  x + \frac{k}{2^n} $. Consequently

\begin{align*}
  \frac{1}{2^n}  \sum_{k=0}^{2^n-1} (-1)^k \left\{x + \frac{k}{2^n} \right \} &= \frac{1}{2^n}  \sum_{k=0}^{2^n-1}(-1)^k x   + \frac{1}{2^{2n} }\sum_{k=0}^{2^n-1} (-1)^k k \\
   & = \left(\frac{1}{2^n} \times  0\right)  +  \left(\frac{1}{2^{2n}}  \times   - 2^{n-1}\right)  = -\frac{1}{2^{n+1}}.
  \end{align*}
Note that the first summation  $ \sum_{k=0}^{2^n-1}(-1)^k x  $ is equal to $0$ because it  has $ 2^n$ terms   with alternating  consecutive terms that cancel out. For the second summation $\sum_{k=0}^{2^n-1} (-1)^k k $, we have
\begin{align*}
 \sum_{k=0}^{2^n-1} (-1)^k k &= (0-1) + (2-3)+ \cdots + (2^n-2)-(2^n-1)  \\
   & = -1 -1 \cdots -1 \\
   &  =  -2^{n-1}.
\end{align*}
Since the original series has $2^n$ terms grouped into $2^{n-1}$ pairs.

\textbf{Case 2}: Let  $    \frac{1}{2^n} \leq x   <  \frac{1}{2^{n-1}}$. Then $ 0 < \frac{k+1}{2^n} \leq x + \frac{k}{2^n} < \frac{k+2}{2^n} < 1, \quad k=0,1,\cdots ,2^n-2 $.  However, for the last index $k = 2^n -1 $  we have $ 1   \leq   x + \frac{k}{2^n} < 1 + \frac{1}{2^n } $. Consequently
$$   \left\{x+ \frac{k}{2^n} \right \} = \begin{cases}
         &  x + \frac{k}{2^n}, \mbox{  if  }  0 \leq k \leq 2^n-2,             \\
         &       x + \frac{1}{2^n}-1,     \mbox{ for } k=2^n-1 .
      \end{cases}
$$
Therefore, for  $ \frac{1}{2^n} \leq x   <  \frac{1}{2^{n-1}}$
\begin{align*}
    \frac{1}{2^n}  \sum_{k=0}^{2^n-1} (-1)^k \left\{x+  \frac{k}{2^{n}}\right \} & = \frac{1}{2^n}  \sum_{k=0}^{2^n-2} (-1)^k \left (x+  \frac{k}{2^{n}}\right ) - \frac{1}{2^n}\left(x-\frac{1}{2^n}\right) \\
   & = \frac{1}{2^n}\left(  x\sum_{k=0}^{2^n-2} (-1)^k +  \frac{1}{2^n}\sum_{k=0}^{2^n-2}(-1)^k k      \right) +\frac{1}{2^{2n}}-\frac{x}{2^n}\\
   & = \frac{1}{2^n}\left(  x  +  \frac{1}{2^n}  \left( -2^{n-1} +2^n-1     \right)    \right) +\frac{1}{2^{2n}}-\frac{x}{2^n}\\
   & = \frac{x}{2^n }+ \frac{1}{2^{n+ 1}} - \frac{1}{2^{2n}}  + \frac{1}{2^{2n}}-\frac{x}{2^n}\\
   & = \frac{1}{2^{n+1}}.
\end{align*}
Now let us evaluate the expression on the right hand side of \eqref{eq:equalityoftwoforms}. For $ \frac{1}{2^n} \leq x   <  \frac{1}{2^{n-1}} $, we have $ 1 \leq 2^nx < 2 $. Therefore,  $\lfloor  2^n x \rfloor = 1 $, and consequently,
 $$  -\frac{(-1)^ \lfloor 2^n x \rfloor }{2^{n+1}}  =  \frac{1}{2 ^{n+1}}. $$
Finally for $ 2^{1-n}$ periodicity of  the  expression on the left hand side of \eqref{eq:equalityoftwoforms}
\begin{align*}
    \frac{1}{2^n}  \sum_{k=0}^{2^n-1} (-1)^k \left\{x+ \frac{k}{2^n}+ \frac{1}{2^{n-1}}\right \} & = \frac{1}{2^n}  \sum_{k=0}^{2^n-1} (-1)^k \left\{x+  \frac{k+2}{2^{n}}\right \}\\
     & =   \frac{1}{2^n}  \sum_{k=2}^{2^n+1} (-1)^k \left\{x+  \frac{k}{2^{n}}\right \} \\
     & =  \frac{1}{2^n}  \sum_{k=0}^{2^n-1} (-1)^k \left\{x+  \frac{k}{2^{n}}\right \}.
  \end{align*}

Note that the terms corresponding to the index $k=0$ and $k=2^n$ are equal, as are those for $k=1$ and $k=2^n+1 $. This proves the assumed periodicity. Now we show that
$$ \frac{1}{2^n}  \sum_{k=1}^{2^n-1} (-1)^k \left\{x+  \frac{k}{2^{n}}\right \}  = \frac{(-1)^ \lfloor 2^n x \rfloor }{2^{n+1}} = \begin{cases}
                & \frac{-1}{2^{n+1}},   \mbox{ if }  0 \leq x < \frac{1}{2^n}, \\
                &  \frac{-1}{2^{n+1}},   \mbox{ if }  \frac{1}{2^n} \leq x < \frac{1}{2^{n-1}}.
             \end{cases}
$$

\end{proof}

\begin{example}
  Based on the  propositions from the previous examples, the fractional part function  $\{x\}$  admits  the infinite series representation  involving  periodic and antiperiodic components:
  \begin{equation}\label{eq:infiniteseriesforfraction}
    \{x\} = \frac{1}{2}-\sum_{n=1}^{\infty} \frac{(-1)^ \lfloor 2^n x \rfloor }{2^{n+1}}
  \end{equation}
 In addition, the Fourier series representation  of $ \{x\} $ is given by:
  \begin{equation}\label{eq:Fourierseries}
    \{x\} = \frac{1}{2}-\frac{1}{\pi}\sum_{n=1}^{\infty}\frac{\sin( 2 \pi n x )}{n}, \quad x \notin \mathbb{Z} .
  \end{equation}
  Furthermore, the even-odd decomposition of $ \{x\} $ is expressed as:
  \begin{equation}\label{eq:evenoddforfractional}
  \{x\} = \underbrace{\frac{1}{2}}_{\text{even}} + \underbrace{ \left(\{x\} - \frac{1}{2}\right)}_{\text{odd}}
  \end{equation}
\end{example}

\subsection{The Orthogonality conditions of sequences of antiperiodic generations}

Let $L^2(0,1) $ be the  Hilbert space of all square integrable complex-valued functions on the open interval $(0,1)$ with  the standard  $L^2[0, 1] $ inner product:
\begin{equation}\label{eq:IPinL2}
  \langle f, g \rangle = \int_0^1 f(x) \overline{g(x)}  dx.
\end{equation}
In this subsection, we show that set
\begin{equation}\label{eq:specificbasis}
   S = \{ (-1)^{\lfloor 2^n x \rfloor}, n \in \mathbb{N} \}
\end{equation}
of all antiperiodic families of functions generated from the fractional part function  $\{x\}$  is  an orthogonal set under the inner product \eqref{eq:IPinL2} and $ S $ is the Radematcher system.

\begin{theorem}
Let  $ f_n \in L^2[0,1],\,  n \in \mathbb{Z}_{\geq 0}$   is defined by   $ f_n(x)=  (-1)^{\lfloor 2^n x \rfloor}$. Then,
\begin{equation}\label{eq:innerproduct}
   \langle f_n, f_m \rangle =  \int_0^1 (-1)^{\lfloor 2^m x \rfloor + \lfloor 2^n x \rfloor}  dx = \delta_{m,n},
\end{equation}
  where $ \delta_{m,n} $ is the Kroneker delta function given by  $ \delta_{m,n} = 1 $ if $m =n $ and $ 0 $ otherwise.
 \end{theorem}

\begin{proof}
If $m =n $, then
$$  \langle f_n ,f_n \rangle  = \int_{0}^{1}(-1)^{\lfloor 2^m x \rfloor + \lfloor 2^m x \rfloor} dx =  \int_{0}^{1}(-1)^{2\lfloor 2^m x \rfloor} dx = \int_{0}^{1} 1 dx =1   $$
Let   $ m \neq n $.  Assume $   m < n $. Partition  the interval $[0, 1)$  into $2^n $ subintervals:
$$    \left[ \frac{k}{2^n}, \frac{k+1}{2^n} \right), \quad k = 0,\, 1, \dots, 2^n - 1.    $$
On each interval, $\lfloor 2^n x \rfloor = k $. Write $k = 2^{n-m} j + r $, where

$$ j = \lfloor k / 2^{n-m} \rfloor, \quad  r \in \{0, \dots, 2^{n-m}-1\} . $$
 Then \(\lfloor 2^m x \rfloor = j\), and  $    (-1)^{\lfloor 2^m x \rfloor + \lfloor 2^n x \rfloor} = (-1)^{j + k}.  $. The integral becomes:
$$   \langle f_m, f_n \rangle = \frac{1}{2^n} \sum_{k=0}^{2^n - 1} (-1)^{j + k}.    $$
Grouping by $j$ (with $2^{n-m}$ values per $j$):
$$
\sum_{r=0}^{2^{n-m} - 1} (-1)^{j + (2^{n-m} j + r)} = (-1)^j (-1)^{2^{n-m} j} \sum_{r=0}^{2^{n-m} - 1} (-1)^r = (-1)^j (1) \cdot 0 = 0,
$$
since the sum over $r$ has an even number of alternating $\pm 1$. This completes the proof.
\end{proof}

\begin{remark}
  We have demonstrated that the fractional part function $\{x\}$ exhibits an infinite series representation, where terms are  periodic and antiperiodic functions. However, it is not always the case that the set $S$, together with the set $\{1\}$,  forms a basis for  arbitrary $1$- periodic function. For example, consider the  function $f(x)= \sin (2 \pi x ) $; this serves as a counterexample. The set $S$ is not complete in generating $L^2[0,1]$. In fact, it can be shown that the set $S$ is a Rademacher system ( see \cite{AS}).
   \end{remark}

   \begin{definition}
   A   \textbf{ Rademacher system }   is the orthonormal system defined on $[0,1]$ as
  $$ r_k (x) :=\sgn \sin 2^{k} \pi x,\quad  x  \in [0,1], n \in \mathbb{N} .   $$
  See, for example, \cite{AS}.
   \end{definition}

   \begin{theorem}
   Except for the set of dyadic rationals, for all $ x  \in [0,1]$
   $$  (-1)^{\lfloor 2^k x \rfloor}  = \sgn \sin 2^k \pi x : =  r_{k} (x). $$
   \end{theorem}

\subsection{The Periodic-Antiperiodic Decomposition of the Hilbert Space $L^2[0,1]$}

In this subsection we illustrate that the Hilbert space $L^2[0,1]$,  with inner product \eqref{eq:IPinL2}, can be  written as a direct sum of the reflection-even subspace $\mathbb{P}$ and reflection-odd subspace $ \mathbb{A} $.
\begin{equation}\label{eq:PsubsetofL2}
   \mathbb{P} = \{ f \in L^2 (0,1): f(x)=f(1-x),\quad  a.e.  \text{ in } [0,1]   \}
\end{equation}

\begin{equation}\label{eq:AsubsetofL2}
 \mathbb{ A} = \{ f \in L^2 (0,1): f(x) =  -f(1-x),\quad  a.e.  \text{ in } [0,1]  \}
\end{equation}

\begin{remark}
To avoid confusion with the translation-periodic and translation-antiperiodic notions of Section 2.1, we refer to the subspaces $ \mathbb{P} $ and $ \mathbb{A }$ as the reflection-even and reflection-odd subspaces, respectively.
For continuous representatives these imply the boundary conditions $ f(0) =f(1) $ (periodic) and $ f(0)=-f(1) $ (antiperiodic) on $[0,1]$, but should not be confused with the translation-based decomposition of Section 2.1.
\end{remark}

\begin{theorem}
We have the following direct sum decomposition:
\begin{equation}\label{eq:L2decompostion}
   L^2[0,1] =  \mathbb{P}  \oplus \mathbb{A}
\end{equation}
\end{theorem}

\begin{proof}
 Let  $f \in L^2 [0, 1] $. Then
  \begin{equation}\label{eq:decompostioninH1}
    f(x)= f_P(x)+ f_A(x),
  \end{equation}
  where
\begin{equation}\label{eq:fAandfP}
  f_P(x)= \frac{f(x)+f(1-x)}{2} \in \mathbb{P} , \quad  f_A(x)= \frac{f(x)- f(1-x)}{2}\in \mathbb{A}.
\end{equation}
If $ f \in  \mathbb{P}  \cap  \mathbb{A} $, then $f(x)= f(1-x)= - f(1-x)$. Hence $ f(x)= 0,  \text{a.e.  in } [0,1] $. So,  $ \mathbb{P}  \cap \mathbb{A}= \{ 0 \} $.
\end{proof}

\begin{theorem}
  Let $ I, S :   L^2[0, 1] \to L^2[0, 1]  $, where  $(Sf)(x)= f(1-x),  $ and $I$ is the identity. Then
  \begin{itemize}
    \item $S$ is an involution, that is $S^2 = I $,
    \item  $ \frac{1}{2} (I \pm S) $ are projections, that is $ \left [\frac{1}{2} (I \pm S) \right ] ^2 =\frac{1}{2} (I \pm S) $.
  \end{itemize}
\end{theorem}

\begin{remark}
  For the Hilbert space $L^2[a, b]$, the operator $S$ is defined as $(Sf)(x)= f(a + b-x) $.
\end{remark}

\begin{theorem}
  The subspaces $A$ and $P $ of the Hilbert space $L^2[0,1]$  defined in \eqref{eq:PsubsetofL2}  and \eqref{eq:AsubsetofL2} are closed subspaces.
\end{theorem}
\begin{proof}
We show that $S: L^2[0,1] \to  L^2[0,1] $ is a bounded linear operator. Indeed,
  $$ \|Sf\|^2 = \int_{0}^{1} |f(1-x)|^2 dx = \int_{0}^{1} |f(x)|^2 dx= \|f\|^2,  $$
  so that $S$ is an isometry. The subspace $ \mathbb{A} $ is the kernel of $I+S $ in $L^2[0,1] $, and the subspace $ \mathbb{P} $ is the kernel of $ I+ S $ in $L^2[0,1] $. Both the operators $I + S $ and $I - S $ are linear and bounded. We know that the kernel of a continuous linear operator between Banach spaces is closed. See, for example \cite{BPT}.
\end{proof}

\begin{theorem}
  For any $f, g \in L^2[0,1] $. Then $ \langle f_P, g_A \rangle = \langle f_A, g_P \rangle = 0  $
\end{theorem}

\begin{proof}
By \eqref{eq:PsubsetofL2} and \eqref{eq:AsubsetofL2}, using properties of definite integrals, we get
  $$ \langle f_P, g_A \rangle = \int_{0}^{1} f_P(x)g_A(x)dx   = \int_{0}^{1} f_P(1-x)g_A(1-x)dx = - \int_{0}^{1} f_P(x)g_A(x)dx = - \langle f_P, g_A \rangle .   $$
   So $ \langle f_P, g_A \rangle = 0 $. In similar way, or by arbitrariness of $f$ and $g$, $ \langle f_A, g_P \rangle = 0 $.
\end{proof}

\begin{corollary}
  For any $f, g \in L^2[0,1] $, if $f= f_A + f_P, \, g = g_A + g_P$, then $ \langle f, g \rangle =  \langle f_P, g_P \rangle +  \langle f_A, g_A \rangle $.
\end{corollary}

\begin{corollary}
  For any $  g \in L^2[0,1] $ with  the decomposition $f = f_P(x)+ f_A(x)$,
  $$ \|f\|^2=  \|  f_P  \|^2 + \|  f_A  \|^2   $$
\end{corollary}

\begin{theorem}
  If $ f \in \mathbb{P} \cap C[0,1] $, then the property $ f(x) =f(1-x) $ holds true for every $ x\in [0,1] $.  If $ f \in \mathbb{A }\cap C[0,1] $, then  the property $ f(x) = -f(1-x) $ holds true for every $ x\in [0,1] $.
\end{theorem}

\begin{corollary}
   If $ f \in  C[0,1] $, then we have the boundary periodicity and boundary antiperiodicity conditions given by :
   \begin{equation}\label{eq:boundaryconditions}
   f_P(0)= \frac{f(0)+f(1)}{2} = f_P(1), \quad f_A(0)= \frac{f(0)-f(1)}{2} = - \frac{f(1)-f(0)}{2} = -f_A(1).
\end{equation}
\end{corollary}
Let us examine the connection between different Fourier series representations of a function $f \in L^2[0,1]$. We know that the set
\begin{equation}\label{eq:basisofL2}
 \mathcal{B} = \{1, \cos (n  \pi   x ), \sin ( n  \pi   x ), n \in \mathbb{N}  \}
 \end{equation}
is an orthogonal basis of the Hilbert space $L^2[0,1]$. Consequently, the Fourier series representation of $f \in L^2[0,1]$ takes the form:
\begin{equation}\label{eq:FouriersseriesinL^2}
     f(x) = a_0 +  \sum_{n=1}^{\infty}    a_n  \cos (n  \pi   x )+ b_n  \sin ( n  \pi   x ),
 \end{equation}

\begin{remark}
The expansion  is sometimes called the \textit{full Fourier series} of $f$ on $[0,1]$. It differs from other expansions that arise when we extend $f$ to the symmetric interval $(-1,1)$ in particular ways:
\begin{itemize}
\item If we extend $f$ to an \textbf{even} function on $(-1,1)$ (i.e., $f(-x)=f(x)$ for $x\in(0,1)$), we obtain the Fourier cosine series (all $b_n = 0$).
\item If we extend $f$ to an \textbf{odd} function on $(-1,1)$ (i.e., $f(-x)=-f(x)$), we obtain the Fourier sine series (all $a_n = 0$).
\item If we extend $f$ \textbf{periodically with period 1} to $(-1,1)$ (i.e., $f(x+1)=f(x)$), the resulting Fourier series on $(-1,1)$ contains only even harmonics ($n$ even), and the coefficients simplify accordingly (the odd-index coefficients vanish).
\end{itemize}
Thus, the full Fourier series (with all harmonics present) does not correspond to any special symmetry; it is simply the orthogonal expansion of $f$ with respect to the basis $\mathcal{B}$ on $[0,1]$.
\end{remark}

 \begin{theorem}
Let $f\in L^2[0,1]$ is represented by a Fourier series of the form given in \eqref{eq:FouriersseriesinL^2}.
 Then  Fourier series for the components $f_P$ and $f_A$ corresponding to the decomposition $f(x)= f_P(x) + f_A(x) $ given in  \eqref{eq:L2decompostion}, are given by:
 \begin{equation}\label{eq:periodicboundaryexpansion}
 f_P(x) = a_0 +  \sum_{n=1}^{\infty}    a_{2n}  \cos (2n  \pi   x )+ b_{2n-1}  \sin ( (2n-1)  \pi   x ) \in \mathbb{P},      \end{equation}

\begin{equation}\label{eq:antiperiodicboundaryexpansion}
  f_A(x) =   \sum_{n=1}^{\infty}    a_{2n-1}  \cos ((2n-1)  \pi   x )+ b_{2n}  \sin ( (2n)  \pi   x ) \in \mathbb{A}.
  \end{equation}
  \end{theorem}

 \begin{proof}
   We observe that the function $ f_P$  and $f_A$ satisfy the conditions given in \eqref{eq:PsubsetofL2} and \eqref{eq:AsubsetofL2}.

  $$ S (\cos (n \pi x ))=  \cos ( n \pi (1-x))= \cos (n \pi x - n \pi )= (-1)^n \cos (n \pi x) $$

   $$ S (\sin (n \pi x ))=  \sin ( n \pi (1-x))= \sin (n \pi x - n \pi )= -(-1)^n \sin (n \pi x) $$

   $$ f_P(x)= \frac{1}{2}(f(x)+(Sf)(x))=  a_0 + \sum_{n=1}^{\infty} \frac{1}{2}(a_n(1+(-1)^n)\cos(n \pi x) + b_n(1-(-1)^n)\sin(n \pi x) ), $$

   $$ f_A(x)= \frac{1}{2}(f(x)-(Sf)(x))=   \sum_{n=1}^{\infty} \frac{1}{2}(a_n(1-(-1)^n)\cos(n \pi x) + b_n(1+ (-1)^n)\sin(n \pi x) ), $$
   The desired results follows by the fact $ 1-(-1)^n $ is equal to $2$ for odd $n$ and $0$ for even $n$; and $ 1+(-1)^n $ is equal to $0$ for odd $n$ and $2$ for even $n$.
   \end{proof}

 \begin{corollary}
   If $f$ is continuous on $[0,1]$, and its Fourier series converges at $x = 0, 1 $, we get periodic  and antiperiodic boundary conditions:
  \begin{equation}\label{eq:summativeBC}
    f_P(0) =  \sum_{n=0}^{\infty} a_{2n}  = f_P(1), \quad  f_A(0) =  \sum_{n=1}^{\infty} a_{2n-1}  = -f_A(1)
   \end{equation}
 \end{corollary}
 Once again, from the Fourier series representation of $f \in L^2[0,1]$,  due to the condition of the  pairwise orthogonality of the distinct pairs of elements of the basis \eqref{eq:basisofL2} of $L^2[0,1]$, it follows  that for any $g \in \mathbb{P }$ and for any $h \in \mathbb{A} $ :
   \begin{equation}\label{eq:orohgonalityfromcomplmentspace}
      \langle g, h \rangle = 0
   \end{equation}
  In particular, for the periodic antiperiodic decomposition $f \in L^2[0,1]$ as $f = f_P + f_A $
  \begin{equation}\label{eq:orthogonalityofcomponents}
  \langle f_P, f_A \rangle = 0
  \end{equation}

\subsection{Connection of the periodic-antiperiodic boundary decomposition to Haar Wavelets}

The starting function or the Haar scaling function is defined as:
\begin{equation}\label{eq:scalingfunction}
 \phi(x)= \begin{cases}
             &  1 \mbox{   if } 0 \leq x < 1,  \\
             &  0  \mbox{ otherwise }.
          \end{cases}
\end{equation}
The Haar wavelet (mother wavelet) is given by
\begin{equation}\label{eq:thepsiPhirelation}
  \psi(x)=\phi(2x)-\phi(2x-1)= \begin{cases}
                                  &  1    \mbox{   if } 0 \leq x < \frac{1}{2}, \\
                                 &  -1    \mbox{   if } \frac{1}{2} \leq x < 1, \\
                                 &   0    \text{    otherwise },
                               \end{cases}
\end{equation}
see, for example \cite{STDD}. For a fixed  $j \geq 0$, the function $\psi(2^j x)$ is supported in the interval $0 \leq x < \frac{1}{2^j}$. It is for this case that as $0 \leq 2^j x < 1$ and that $\psi(x)=0$ outside of $[0,1]$, as defined in \eqref{eq:thepsiPhirelation}. By shifting $\psi_{j,0} = \psi(2^j x)$ by $k/2^j$ units to the right, we get $\psi_{j,k}(x)= \psi(2^j x - k)$ for $k$ values ranging from $0, 1, \dots, 2^j - 1$. The supports of all these functions are disjoint and all  fit into the unit interval $[0,1]$. The formula for Haar wavelet basis functions is given by
\begin{equation}\label{eq:Haarwavletbasis}
  \psi_{j,k}=2^{\frac{j}{2}}\psi(2^jx-k),\quad  j\geq 0, \quad  k =0, 1, \cdots ,2^j-1,
\end{equation}
comes from two natural operations : dilation(scaling) and translations(shifting). See, for example, \cite{CA}, \cite{MS}, \cite{ID}. Combining with the normalization to keep the $L^2$-norms equal to 1. The second level wavelets are :

\begin{equation}\label{eq:psi10andpsi11}
   \psi_{1,0}(x)=  \sqrt{2} \cdot  \begin{cases}
                                  &  1    \mbox{   if } 0 \leq x < \frac{1}{4}, \\
                                   &  -1    \mbox{   if } \frac{1}{4} \leq x < \frac{1}{2},  \\
                                  &  0     \mbox{ otherwise }.
                               \end{cases},  \quad  \psi_{1,1}(x)=  \sqrt{2} \cdot  \begin{cases}
                                  &  1    \mbox{   if } \frac{1}{2} \leq x < \frac{3}{4}, \\
                                   &  -1    \mbox{   if } \frac{3}{4} \leq x < 1,  \\
                                  &  0     \mbox{ otherwise }.
                               \end{cases}
\end{equation}

We want  each $\psi_{j,k}$ to have  unit norm. The function $ \psi(2^j x-k) $ has support of length  $2^{-j} $. Since its values are $\pm 1 $, $  \|\psi(2^j x-k) \|_2^2 = 2^{-j} $. Therefore
\begin{equation}\label{eq:normofpsysubj}
  \|\psi(2^j x-k) \|= 2^{-j/2}
\end{equation}
The  normalised function is obtained by multiplying by $ 2^{j/2}$, so that
\begin{equation}\label{eq:normalisedjsubk}
 \|   2^{j/2}\psi(2^j x-k) \| = 1.
\end{equation}
So the normalized dilated wavelet is  $  2^{j/2}\psi(2^j x-k) $.
To cover the whole unit interval $[0,1)$ we need to translate (shift) the dilated wavelet to different positions.
$ \psi_{j,k}(x)= 2^{j/2}\psi_{j,k}(2^j x -k)$ is non-zero only when $0 \leq 2^j x -k < 1  $ or $k/ 2^j \leq x < (k+1)/2^j$.
Now we associate this Haar wavelets with the sequence of anti periodic decomposition of  a unit rectangle function $\chi_{[0,1)}$. Consider the piecewise constant function:

\begin{equation}\label{eq:aandb}
 f(x)= \begin{cases}
             &  a \mbox{   if }, 0 \leq x < 1/2,  \\
             &  b  \mbox{ if }, 1/2 \leq x < 1.
          \end{cases}
\end{equation}
Then

\begin{equation}\label{eq:psiofandb}
   f_A(x)= \frac{f(x)- f(1-x)}{2}= \frac{a-b}{2} \cdot  \begin{cases}
                                  &  1    \mbox{   if } 0 \leq x < \frac{1}{2}, \\
                                   &  -1    \mbox{   if } \frac{1}{2} \leq x < 1.
                                  \end{cases}
\end{equation}

Therefore $ f_A(x) = \frac{a-b}{2} \psi (x) $, yields the mother wavelet up to a scalar multiple, where as $f_p(x)= \frac{a+b}{2} \chi_{[0,1)}(x) $ which is the scaling function of the Haar system up to scalar multiple. Now consider the method of periodic antiperiodic boundary condition decomposition  That yields the Haar Wavlet basis.
Let  the scaling function be the same as  the scaling function of  the Haar wavelet  given in \eqref{eq:Haarwavletbasis}. Divide the interval $[0,1)$ into two subintervals as $ [0, 1/2) $ and $[1/2,1) $.  Next divide each of these two half subintervals each of them into two parts as $ [0,1/4),  [1/4,1/2),  [1/2,3/4), [3/4,1)$. Continuing in this manner, at the $j$  step, we have $2^j $ subintervals of $[0,1)$ whose boundary points are dyadic rationals. Define

\begin{equation}\label{eq:piecewiseconstamt}
    I_{j,k}:= \left [\frac{k}{2^{j-1}},\frac{k+1}{2^{j-1}}\right ), \quad k = 0, 1,\cdots, 2^{j-1}-1.
\end{equation}

For each $j \in \mathbb{N} $ we have $2^{j-1}$ antiperiodic cycles.

\begin{equation}\label{eq:fsubjkdefined}
f_{j,k}(x) =
\begin{cases}
 1, & \text{if } x \in \left[ \dfrac{k}{2^{j-1}}, \dfrac{2k+1}{2^{j}} \right), \\[8pt]
-1, & \text{if } x \in \left[ \dfrac{2k+1}{2^{j}}, \dfrac{k+1}{2^{j-1}} \right), \qquad k = 0,1,\dots, 2^{j-1}-1.
\end{cases}
\end{equation}

\begin{theorem}
If $x \in \left[ \dfrac{k}{2^{j-1}}, \dfrac{2k+1}{2^{j}} \right)$ then $\dfrac{2k+1}{2^{j-1}} - x \in \left( \dfrac{2k+1}{2^{j}}, \dfrac{k+1}{2^{j-1}} \right]$, and conversely.
\end{theorem}

\begin{theorem}
The following  reflection condition:
\begin{equation}\label{eq:reflectioncondition}
   f_{j,k}\left( \dfrac{2k+1}{2^{j-1}} - x \right) =   -f_{j,k} (x) , \quad  x  \in I_{j,k}.
\end{equation}
\end{theorem}

\begin{theorem}[Haar wavelet generation  ]
  Let $\mathcal{P}_0$ denote the one-dimensional subspace spanned by  the scaling function $ \phi (x) = \chi_{[0,1)}(x)$. Define recursively,

  \begin{equation}\label{eq:Asubj}
    \mathcal{A}_j:  \left \{   f_j \in L^2[0,1]:   f_{j,k}\left( \dfrac{2k+1}{2^{j-1}} - x \right) =   -f_{j,k} (x) , \quad  \text{ for a.e. } x  \in I_{j,k},\, k = 0,1,\cdots, 2^{j-1}-1              \right\}
  \end{equation}
  and let $ \mathcal{P}_j = \mathcal{P}_{j-1} \oplus \mathcal{A}_{j}    $. Then
  \begin{itemize}
    \item $ \mathcal{A}_j $ is the $j$-th wavelet subspace, spanned by the normalized Haar wavelet $\psi_{j-1,k}  k = 0,1,\cdots, 2^{j-1}-1  $
    \item Consequently, $\bigoplus_{j=1} ^\infty \mathcal{A}_j  $ is dense in $L^2[0,1]$.
     This construction reproduces the classical Haar wavelet decomposition and is naturally related to the Haar multiresolution analysis.
  \end{itemize}
\end{theorem}

\begin{proof}

The function $f_{j,k}$ defined in \eqref{eq:fsubjkdefined} is supported on $I_{j,k}$, takes the value $+1$ on the left half of $I_{j,k}$, and the value $-1$ on the right half. Consequently,
\begin{equation}
f_{j,k} = 2^{-(j-1)/2} \psi_{j-1,k}.
\end{equation}
Hence
\begin{equation}
\mathcal A_j = \operatorname{span} \{\psi_{j-1,k} : k=0,\dots,2^{j-1}-1 \},
\end{equation}
which is precisely the Haar wavelet subspace at scale $j-1$.
\end{proof}

\section{Conclusions and possible further Works}

In this paper, we have examined  the decomposition of a periodic function of period  $p$ into a periodic function of $p/2$ and an antiperiodic function of antiperiodic $p/2$. The newly introduced periodic function resulting from this decomposition is referred to as the first periodic generation. Continuing this process with the first periodic generation yields the second periodic generation.

If the magnitude of the $n$-th periodic generation of a given periodic function tends to zero uniformly on an initial interval $ [x_0, x_0+p] $, where $p$, is the fundamental period of $f$, then $f$ can be represented as an infinite sum of antiperiodic functions with varying fundamental antiperiods.
        $$ f(x)= c + \sum_{j=1}^{\infty}  \tilde{f}_j(x),  $$
where  $ \tilde{f}_j(x) $  is the $j$ the  antiperiodic generation of $f$ satisfying the condition $ \tilde{f}_j(x + T/2^j ) = - \tilde{f}_j(x) $. As a specific example of such indefinite decomposition we have shown that the fractional part  function $\{ x \}$ yields the  Rademacher system which is an orthogonal system that is incomplete.
Another decomposition of  the Hilbert space of integrable function $L^2[0,1]$ is decomposed into $ L^2[0,1] $.
          $$  L^2[0,1]= \mathcal{P}_0 \oplus \bigoplus_{n=1}^{\infty} \mathcal{A}_n, $$
where
$$ \mathcal{P}_0 = \operatorname{span}\{\phi\} \quad (\text{constants}),\qquad \mathcal{A}_n = \operatorname{span}\{\psi_{n,k} : k = 0,1,\dots,2^n-1\}. $$
Although the current  work is primarily emerged by analyzing solutions some difference equations, the decomposition of a space of $ T$-periodic functions into subspaces of $ T/2 $-periodic and $ T/2$ -antiperiodic functions (i.e., $ f(x+T/2) = -f(x) $) is a fundamental symmetry technique with wide-ranging applications:

\begin{itemize}
\item In Floquet theory for time-periodic systems, for a Hamiltonian $  H(t+T)=H(t) $, the time-evolution operator's monodromy matrix has eigenvalues $ e^{-i\varepsilon T} $. The quasi-energies $ \varepsilon $ are defined modulo $ 2\pi/T $. The subspaces corresponding to  $ \varepsilon = 0 $ (periodic) and $\varepsilon=\pi/ T $ (antiperiodic) play a special role in determining stability, band gaps, and the Floquet spectrum.

\item In Bloch theory in solid state physics, in a periodic potential with lattice constant $a$, Bloch functions are classified by crystal momentum $k$. The decomposition into periodic ($ k=0 $ ) and antiperiodic ($ k=\pi/a $) boundary conditions occurs at the center and edge of the Brillouin zone. This helps analyze energy band structures, especially band crossings and topological properties.

\item  Eigenfunctions of periodic differential operators (e.g., Mathieu equation) are either periodic or antiperiodic with half the period. The decomposition yields two separate eigenvalue sequences, which are used to determine stability regions (Ince--Strutt diagrams) and to construct Green's functions.

\item  In Signal processing and harmonic analysis, Any $T$-periodic signal can be split into components with half-period symmetry. This reduces data storage (e.g., compressing symmetric waveforms) and simplifies filtering operations, especially in systems where half-period delays are natural (e.g., push-pull amplifiers).

\item   For PDEs with periodic boundary conditions, using basis functions that are a priori periodic or antiperiodic (e.g., even/odd Fourier series) decouples the problem into independent subsystems, reducing computational cost and improving accuracy.

\end{itemize}

\section*{Statements of Declarations}

\subsection*{Conflict of interests}

The author declares that there is no conflict of interests regarding the publication of this paper.

\subsection{Originality of the work}

The main results presented in this paper are original to the author, constitute new findings, and have not been published elsewhere.

\subsection*{Author contribution}
The corresponding author is the sole originator and author of this article.

\subsection*{Data availability}

There are no external data sources used in this paper other than the reference materials cited within it.

\subsection*{funding}

This research work has not received funding from any organization or individual.

\subsection*{Acknowledgment}

The author is grateful to AAU members Prof. Tadesse Abdi and Prof. Samuel Asefa for their valuable comments on the final draft. The e-print version of this manuscript by the current author is available at \cite{HBY}.


\end{document}